\newtheorem{theorem}{Theorem}[section]
\newtheorem{lemma}[theorem]{Lemma}
\newtheorem{proposition}[theorem]{Proposition}
\theoremstyle{definition}
\newtheorem{definition}[theorem]{Definition}
\newtheorem{example}[theorem]{Example}
\newtheorem{remark}[theorem]{Remark}
\numberwithin{equation}{section}
\begin{document}

\title[Gabor Tight Fusion Frames]{Gabor Tight Fusion Frames: Construction and Applications in Signal Retrieval Modulo Phase}

\keywords{ Frame Theory, Fusion Frames, Sensor Networks, Gabor Frames, Gabor Fusion Frames, Phase Retrieval, Phaseless Reconstruction}
 
\author[Mohammadpour]{Mozhgan Mohammadpour}
\address{Department of Pure Mathematics, Faculty of Mathematical sciences, Ferdowsi University of Mashhad, Iran}
\email{mozhganmohammadpour@gmail.com}

 \author[Tuomanen]{Brian Tuomanen}
\address{Department of Mathematics, University
of Missouri, Columbia, MO 65211-4100, USA}
\email{btuomanen@outlook.com}

\author[Kamyabi Gol]{Rajab Ali Kamyabi Gol}
\address{Department of Pure Mathematics, Faculty of Mathematical sciences, Ferdowsi University of Mashhad, Iran}
\email{kamyabi@um.ac.ir}

\thanks{The second author was supported by NSF ATD 1321779.}

\begin{abstract}
Hilbert space fusion frames are a natural extension of Hilbert space frames, extending the notion from a set of vectors in a Hilbert space to a set of subspaces of a Hilbert space with analogous notions of overcompleteness and boundedness.  As tight frames are a very important topic within standard frame theory, tight fusion frames are similarly important;  however, only trivial examples of tight fusion frames are hitherto known. In this paper, we apply ideas from Gabor analysis to demonstrate a non-trivial construction of tight fusion frames by using the fact that a fusion frame for a finite dimensional Hilbert space $\mathcal{H}$ with $M$ subspaces is a frame for the finite dimensional Hilbert space $\mathcal{H}^M$. We then use this construction to further show their applicability in some cases for the retrieval of signals modulo phase.
\end{abstract} 

\maketitle

\section{\bf{Introduction}} 

Fusion frame theory has recently garnered great interest among researchers who work in signal processing.  Fusion frames extend the notion of a frame (i.e., an overcomplete set of vectors) within a Hilbert space $\mathcal{H}$ to a collection of subspaces $\{W_i\}_{i \in I}$ (with orthogonal projections $\{P_i \}_{i \in I}$) in $\mathcal{H}$.  This concept was originally introduced by Kutyniok and Casazza in \cite{FOS}.  

A tight fusion frame  is one such that we have the identity $\sum_{i\in I} P_i=CI_{N\times N}$, i.e., the sum of the  projections is a multiple of the identity. Such tight fusion frames are of interest for two reasons.  First, they guarantee a very simple reconstruction of a signal; and second, tight fusion frames are robust against noise \cite{TFF} and  also remain robust against a one-erasure subspace when the rank of projections are equal to each other \cite{RobustDim}. 

On the other hand, phaseless reconstruction is a field that has gathered interest in the mathematical community in the last decade.  Phaseless reconstruction (or equivalently, \textit{phase retrieval}) is defined as the recovery of a signal modulo phase from the absolute values of fusion frame measurement coefficients arising from a fusion frame. This is known to have applications to a disparate array of other scientific and applied disciplines, including X-ray crystallography \cite{Drenth}, speech recognition \cite{Becchetti, Rabiner, Proakis}, and quantum state tomography \cite{Renes}, where the recorded phase information of a signal is lost or distorted.  

In the case of phase retrieval, the signal must be recovered from coefficients of dimension higher than one.  Here, in the context of fusion frames, the problem is to recover $x\in \mathcal{H}_M$ ``up to phase" from the measurements $\{\Vert P_i x \Vert\}_{i=1}^N$.

In this paper we demonstrate a new method for the construction of tight fusion frames. There are hithero few examples of tight fusion frames except trivial ones made up of orthogonal subspaces, so we believe this is a relevant and interesting advance.  Moreover, there are few examples of phase retrieval fusion frames. In this paper, we present a condition that makes this structure allow phase retrieval. 

This article is organized as follows: Section 2 starts with preliminaries about tight fusion frames and phase retrievability of fusion frames. Section 3 is devoted to a brief summary of Gabor frames which is used to construct the tight fusion frames. In section 4, we explain our method to construct tight fusion frames. Section 5 focuses on finding conditions that makes our tight fusion frame allow phase retrieval, and our conclusion is in section 6.

\section{\bf{ Preliminaries And Notation }}

A fusion frame is defined as follows:

\begin{definition}
\label{FusionFrameDef}
Consider a Hilbert space $\mathcal{H}$, with a collection of subspaces $\{W_i\}_{i \in I}$ and an associated set of positive weights $\{ \nu_i \}_{i \in I}$ .  We likewise denote the associated orthogonal projections $P_i : \mathcal{H} \mapsto W_i$.  Then we call $\{W_i\}_{i\in I}$ a \textbf{fusion frame} if there are positive constants $0 < A \leq B < \infty$ such that for any $x \in \mathcal{H}$ we have the following:

$$ A \| x \|^2 \le \sum_{i \in I}\| P_i x \|^2 \le B \| x \|^2 $$

\end{definition}

\begin{definition}
A \textbf{ tight fusion frame} is a fusion frame as in \ref{FusionFrameDef} where $A=B$ for all $i \in I$.  That is to say, we have the following for any $x \in \mathcal{H}$:  

$$ \sum_{i \in I} \| P_i x \|^2 = A \| x \|^2 $$

Or, equivalently:

\[
AI=\sum_{i=1}^N P_i 
\]
\end{definition}

Now, consider an orthonormal basis for the range of $P_i$, that is $\{e_{i, \ell}\}_{i=1}^n$.  We know that:
\[
P_ix=\sum_{\ell=1}^n \langle x, e_{i, \ell} \rangle e_{i,\ell}
\]
for all $\mathbf{x}\in \mathbb{C}^N$. Summing these equations over $i=1,\cdots,N$ together 
\[
Ax=\sum_{i=1}^N P_i\mathbf{x}=\sum_{i=1}^N \sum_{ \ell =1}^n \langle \mathbf{x}, e_{i, \ell} \rangle e_{i, \ell}
\]

One can recover the signal modulo phase from fusion frame measurements. In this senario, consider we are given subspaces $\{W_i\}_{i=1}^N$ of $M$-dimensional Hilbert space $\mathcal{H}_M$ and orthogonal projections $P_i:\mathcal{H}_M\rightarrow W_i$. We want to recover any $\mathbf{x}\in \mathcal{H}_M$ (up to a global phase factor) from the fusion frame measurements $\{\Vert P_i\mathbf{x}\Vert \}_{i=1}^N$. To fix notation, denote $\mathbb{T} = \{c\in \mathbb{C}; \vert c\vert =1\}$. The measurement process is then given by the map:
\[
\mathcal{A}:{\mathbb{C}^M}/{\mathbb{T}}\rightarrow \mathbb{C}^N, \quad \mathcal{A}\mathbf{x}\left(n\right)=\Vert P_n\mathbf{x}\Vert
\]

We say $\{W_i\}_{i=1}^N$ \textbf{allows phaseless reconstruction} or \textbf{allows phase retrieval} if  $\mathcal{A}$ is injective;  we call a frame (or fusion frame) with this property a \textbf{phase retrieval frame}. In the case where $\dim W_i=1$ for $i=1,\cdots, N$, the problem will be referred to as the classical phaseless reconstruction problem. In section $4$, we will provide a novel structure of tight fusion frames where under particular conditions, will allow phaseless reconstruction.

\section{ \bf{Gabor Frames For $\mathbb{C}^N$}} 
In this section, we provide a brief summary of Gabor frames which is used to construct our tight fusion frames.
We index the components of a vector $\mathbf{x}\in \mathbb{C}^N$ by
$\{0, 1, \cdots, N-1\}$, i.e., the cyclic group
$\mathbb{Z}_{N}$.  We will write $\mathbf{x}\left( k \right)$ 
instead of $\mathbf{x}_k$ to avoid algebraic operations on
indices.  

The discrete Fourier transform is basic in Gabor analysis and is defined as
\[ 
\mathcal{F} \mathbf{x} \left( m \right) = \hat{\mathbf{x}}\left( m \right) = \sum_{n=0}^{N-1} \mathbf{x}\left( n\right) e^{-2\pi i m\frac{n}{N}}. 
\] 

The most important properties of the Fourier transform are the Fourier
inversion formula and the Parseval formula
\cite{FiniteFrames}. The inversion formula shows that
any $\mathbf{x}$ can be written as a linear combination of harmonics.
This means the normalized harmonics $\{\frac{1}{\sqrt{N}} e^{2\pi i m
  \frac{\left(. \right)}{N}}\}_{m=0}^{N-1}$ form an orthonormal basis
of $\mathbb{C}^N$ and hence we have 
\[ 
\mathbf{x}= \frac{1}{N} \sum_{m=0}^{N-1} \hat{\mathbf{x}} \left( m \right) e^{2\pi i m \frac{n}{N}} \quad \mathbf{x} \in \mathbb{C}^N. 
\] 
Moreover, the Parseval formula states
\[ 
\langle \mathbf{x}, \mathbf{y} \rangle = \frac{1}{N} \langle \hat{\mathbf{x}}, \hat{\mathbf{y}} \rangle \quad \mathbf{x}, \mathbf{y} \in \mathbb{C}^N,
\] 
which results in 
\[ 
\sum_{n=0}^{N-1} \vert \mathbf{x} \left( n \right) \vert ^2 = \frac{1}{N} \sum_{m=0}^{N-1} \vert \hat{\mathbf{x}} \left( m \right) \vert ^2,
\] 
where $\vert\mathbf{x}\left(n \right) \vert ^2$ quantifies the energy
of the signal $\mathbf{x}$ at time $n$, and the Fourier coefficients
$\hat{\mathbf{x}}\left(m \right)$ indicates that the harmonic $e^{2\pi
  i m\frac{\left(.\right)}{N}}$ contributes energy $\frac{1}{N}\vert
\hat{\mathbf{x}}\left(m \right) \vert^2$ to $\mathbf{x}$.

Gabor analysis concerns the interplay of the Fourier transform, translation operators, and modulation operators. The cyclic translation operator $T: \mathbb{C}^N \rightarrow \mathbb{C}^N$ is given by
\[ 
T\mathbf{x}= T \left( \mathbf{x}\left(0 \right), \cdots, \mathbf{x} \left( N-1 \right) \right)^t= \left( \mathbf{x}\left( N-1\right) , \mathbf{x} \left( 0 \right) , \cdots, \mathbf{x} \left( N-2 \right) \right)^t .
\] 
The translation $T_k$ is given by
\[ 
T_k \mathbf{x} \left(n\right)= T^k \mathbf{x}\left( n \right)=\mathbf{x}\left(n-k\right) .
\] 
The operator $T_k$ alters the position of the entries of
$\mathbf{x}$. Note that $n-k$ is achieved modulo $N$. 
The modulation operator $M_\ell:\mathbf{C}^N\rightarrow \mathbb{C}^N$ is
given by  
\[ 
M_\ell\mathbf{x}= \left( e^{-2\pi i \ell\frac{0}{N}} \mathbf{x}\left( 0 \right), e^{-2\pi i \ell\frac{1}{N}} \mathbf{x}\left(1\right), \cdots , e^{-2\pi i \ell\frac{N-1}{N}} \mathbf{x}\left( N-1 \right) \right)^t .
\] 
Modulation operators are implemented as the pointwise product of the
vector with harmonics $e^{-2\pi i \ell\frac{.}{N}}$. 

Translation and modulation operators are referred to as time-shift and frequency shift
operators. The time-frequency shift operator $\pi \left( k,\ell \right)$
is the combination of translation operators and modulation operators:
\[ 
\pi \left(k,\ell \right): \mathbb{C}^N\rightarrow \mathbb{C}^N \quad \pi \left( k,\ell\right) \mathbf{x}= M_\ell T_k \mathbf{x}.
\] 
Hence, the short time-Fourier transform $V_{\phi}:
\mathbb{C}^N\rightarrow \mathbb{C}^{N\times N}$ with respect to the window
$\phi \in \mathbb{C}^N$ can be written as
\[ 
V_{\phi}\mathbf{x} \left( k,\ell \right) = \langle \mathbf{x}, \pi \left( k,\ell \right) \phi \rangle= \sum_{n=0}^{N-1} \mathbf{x}\left(n \right) \overline{\phi\left(n-k\right)} e^{-2\pi i \ell\frac{n}{N}} \quad \mathbf{x} \in \mathbb{C}^N. 
\] 
The short time-Fourier transform generally uses a window function
$\phi$, supported at neighborhood of zero that is translated by
$k$. Hence, the pointwise product with $\mathbf{x}$ selects a portion
of $\mathbf{x}$ centered at $k$, and this portion is analyzed using a
Fourier transform. The inversion formula for the short time-Fourier
transform is \cite{FiniteFrames}
\[ 
\begin{split} 
\mathbf{x}\left( n \right)& = \frac{1}{N\Vert\phi \Vert_2^2} \sum_{k=0}^{N-1} \sum _{\ell=0}^{N-1} V_{\phi}\mathbf{x}\left(k,\ell\right) \phi \left(n-k \right) e^{-2\pi i \ell \frac{n}{N}} \\ 
&=\frac{1}{N\Vert\phi \Vert_2^2} \sum_{k=0}^{N-1} \sum _{\ell=0}^{N-1} \langle \mathbf{x}, \pi \left(k,\ell\right)\phi \rangle\pi\left(k,\ell\right)\phi\left(n\right) \quad \mathbf{x} \in \mathbb{C}^N .
\end{split} 
\] 
So it can be easily seen that for all $\phi\neq 0$, the system is a $N\Vert \phi \Vert ^2$ tight Gabor frame.

\section{\bf{Gabor Fusion Frame For $\mathbb{C}^N$}}
In this section, we show our method to construct Gabor tight fusion frames. In fact, we show that Gabor fusion frame for $\mathbb{C}^N$ is a Gabor frame for $\mathbb{C}^{N\times N}$ where the signal is coming from the subspce $\mathbb{C}^N\subset \mathbb{C}^{N\times N}$. The key idea is to start with a general approach for the construction of tight fusion frames, which has certain conditions that must be satisfied.  We then show that these conditions are indeed satisfied using methods from the Gabor frame theory.

We begin by showing the following proposition, which is the generalization of our approach with certain conditions:

\begin{proposition}\label{main1}
Consider a collection of frame sequences $\{\{f_{ij}\}_{i=1}^L\}_{j=1}^M$ within the finite dimensional Hilbert space $\mathbb{C}^N$, and denote $W_i:=span\{f_{ij}\}_{j=1}^M$. Suppose there exists an index $i_0$ such that $ \{f_{i_0j}\}_{j=1}^M$ is a $B$-tight frame for $W_{i_0}$  and also a set of coisometry operators $\{\mathcal{U}_i\}_{i=1}^L$ from $\mathbb{C}^N$ to $\mathbb{C}^N$ such that for each $j=1,...,M$, we have 
\[
\{f_{ij}\}_{i=1}^L =\{\mathcal{U}_i f_{i_0j}\}_{i=1}^L.
\]  
Furthermore, if the set $\{f_{ij}\}_{i=1}^L$ is an $A_j$-tight frame in $\mathbb{C}^N$ for every $j=1,\cdots,M$.\\
Then we will have that $\{\left(W_i,1\right)\}_{i=1}^L$ is a tight fusion frame.

\end{proposition}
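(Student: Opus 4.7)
The plan is to evaluate $\sum_{i=1}^{L} P_i$ by writing each $P_i$ in terms of the frame operator of the sequence $\{f_{ij}\}_{j=1}^M$ and then swapping the order of summation so as to invoke the $A_j$-tight hypothesis on the ``column'' families $\{f_{ij}\}_{i=1}^L$.

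First I would promote the tightness of the single ``row'' $\{f_{i_0j}\}_{j=1}^M$ to every row. Since a coisometry $\mathcal{U}_i:\mathbb{C}^N\to\mathbb{C}^N$ in this finite-dimensional, equal-dimension setting satisfies $\mathcal{U}_i\mathcal{U}_i^*=I$ and is therefore unitary, the identity $f_{ij}=\mathcal{U}_if_{i_0j}$ yields $W_i=\mathcal{U}_iW_{i_0}$ and $P_i=\mathcal{U}_iP_{i_0}\mathcal{U}_i^*$. A direct substitution shows that $\{f_{ij}\}_{j=1}^M$ is a $B$-tight frame for $W_i$, because for every $x\in\mathbb{C}^N$,
\[
\sum_{j=1}^M|\langle x,f_{ij}\rangle|^2=\sum_{j=1}^M|\langle\mathcal{U}_i^*x,f_{i_0j}\rangle|^2=B\|P_{i_0}\mathcal{U}_i^*x\|^2=B\|P_ix\|^2.
\]

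Next I would translate this into an operator identity. A $B$-tight frame for $W_i$ has frame operator equal to $BP_i$; since each $f_{ij}\in W_i$, vectors in $W_i^{\perp}$ are annihilated, so for every $x\in\mathbb{C}^N$,
\[
BP_ix=\sum_{j=1}^M\langle x,f_{ij}\rangle f_{ij}.
\]
Summing over $i$ and then interchanging the two finite sums,
\[
B\sum_{i=1}^LP_ix=\sum_{i=1}^L\sum_{j=1}^M\langle x,f_{ij}\rangle f_{ij}=\sum_{j=1}^M\sum_{i=1}^L\langle x,f_{ij}\rangle f_{ij}=\sum_{j=1}^MA_jx,
\]
the last equality using that $\{f_{ij}\}_{i=1}^L$ is an $A_j$-tight frame for $\mathbb{C}^N$ and hence has frame operator $A_jI$. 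Dividing by $B$ yields $\sum_{i=1}^LP_i=\bigl(\tfrac{1}{B}\sum_{j=1}^MA_j\bigr)I$, which is exactly the tight fusion frame identity for constant weights $\nu_i=1$.

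The only genuine step is the first one, transporting $B$-tightness from $W_{i_0}$ to each $W_i$ via the (co)isometry $\mathcal{U}_i$; once that is in hand the argument is a short Fubini-type swap of sums. I do not anticipate an essential difficulty, since in finite dimensions with equal source and target, a coisometry is automatically unitary, so both $\mathcal{U}_i^*\mathcal{U}_i=I$ and $\mathcal{U}_i\mathcal{U}_i^*=I$ are freely available.
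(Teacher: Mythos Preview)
Your proof is correct and follows essentially the same two-step strategy as the paper: first transport the $B$-tightness from $W_{i_0}$ to every $W_i$ via the (co)isometries $\mathcal{U}_i$, then swap the order of summation to invoke the $A_j$-tightness of the column families. The only cosmetic difference is that you phrase the second step at the operator level ($\sum_i P_i = cI$ via the frame-operator identity $BP_i=\sum_j\langle\cdot,f_{ij}\rangle f_{ij}$), whereas the paper works with the equivalent norm-squared identity $\sum_i\|P_ix\|^2=c\|x\|^2$; your explicit remark that a finite-dimensional coisometry is unitary also makes the first step slightly cleaner than the paper's version.
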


\begin{proof}
Consider $\mathbf{x}\in W_i$. The set $\{\mathcal{U}_if_{i_0j}\}_{j=1}^M$ is a $B$-tight frame for $W_i$ over $i=1,\cdots,L$, because 
\[
\begin{split}
\sum_{j=1}^M \vert\langle \mathbf{x}, \mathcal{U}_if_{i_0j} \rangle \vert^2&=\sum_{j=1}^M \vert\langle \mathcal{U}_i^* \mathbf{x}, f_{i_0j} \rangle \vert^2\\
                                                 &=B\Vert \mathcal{U}_i^*\mathbf{x}\Vert ^2\\
                                                 &=B\Vert \mathbf{x}\Vert ^2
\end{split}                                                 
\]
Hence we have, for any $x \in \mathbb{C}^N$:
\[
\begin{split}
\sum_{i=1}^L\Vert P_ix\Vert ^2&= \sum_{i=1}^L \frac{1}{B}\sum_{j=1}^M \vert \langle P_i\mathbf{x} , f_{ij} \rangle \vert ^2 \\
                                                  &=\sum_{i=1}^L\frac{1}{B} \sum_{j=1}^M\vert \langle \mathbf{x} , f_{ij} \rangle \vert ^2 \\
                                                  &=\frac{1}{B}\sum_{j=1}^M \sum_{i=1}^L\vert \langle \mathbf{x} , f_{ij} \rangle \vert ^2\\
                                                  &=\frac{1}{B}\sum_{j=1}^M A_j \Vert \mathbf{x} \Vert ^2 \quad \quad \\
                                                  &=\frac{\sum_{j=1}^MA_j}{B} \Vert \mathbf{x} \Vert ^2,
 \end{split}                                                                                                 
\]
where $P_i$ is the orthogonal projection on $W_i$. The equality holds since $\{f_{ij}\}_{i=1}^L$ is an $A_j$-tight frame for $\mathbb{C}^N$ for $j=1,\cdots,M$.
\end{proof}   
In the following, we explain the method to construct tight fusion frame based on the Theorem \ref{main1} and  Gabor frames on finite dimensional signals \cite{FiniteFrames}.

To do this, every subspace $W$ can be modeled by a matrix whose rows are an orthonormal basis for $W$. On the other hand, every subspace of dimension $M$ can be represented by a matrix $N \times N$ whose first $M$ rows are an orthonormal basis for $W$, since $\mathbb{C}^{N\times M}$ can be embeded in $\mathbb{C}^{N \times N}$. For example if the subspace $W$ is generated by $\{\mathbf{e}_1, \cdots, \mathbf{e}_{M}\}$, then, the matrix associated to this subspace is as follows: 
\[
\left[ \mathbf{e}_1, \cdots, \mathbf{e}_M,0 , \cdots,0 \right]^*
\]
Moreover, a signal $\mathbf{x}$ of length $N$ can be represenetd by a matrix of $N \times N$ since $\mathbb{C}^N$ can be embeded in $\mathbb{C}^{N \times N}$.
\[
\tilde{X}=\left[ \mathbf{x}, 0 \cdots, 0\right]^*
\]
Based on the notation stated above, we define $\mathbb{C}^{N \times N}$-valued inner product on $\mathbb{C}^{N \times N}$ as follows:
\[
\langle \mathbf{X}, \mathbf{Y} \rangle = \mathbf{X} \mathbf{Y}^*
\]

According to the notions above, if the subspaces $W_i$ of a fusion frame is denoted by a matrices $X_i$, then the fusion frame $\{W_i\}_{i=1}^M$ for $\mathbb{C}^N$ is the same as $\{X_i\}_{i=1}^M$ is a frame for $\mathbb{C}^{N\times N}$, where $\mathbf{x}\in \mathbb{C}^N\subset \mathbb{C}^{N\times N}$. This view point help us to extend several notions and theorems about frame theory to fusion frame theory. For example, the Gabor fusion frame is defined in the following way.

The translation and modulation operators for the space of complex valued square matrix of dimension $N$ are defined as follows: Consider $l \in \mathbb{Z}_N$. The translation operator $\tilde{T}_\ell:\mathbb{C}^{N \times N}\rightarrow \mathbb{C}^{N \times N}$ is defined as follows:
\[
\tilde{T}_\ell \left( \mathbf{e}_1, \cdots, \mathbf{e}_N \right)^*= \left(T_\ell \mathbf{e}_1, \cdots, T_\ell \mathbf{e}_N \right)^*
\]
In fact the translation operator $\tilde{T}_\ell$ alters the position of each row of the matrix $\mathbf{X}$.
The modulation operator $\tilde{M}_\ell:\mathbb{C}^{N \times N}\rightarrow \mathbb{C}^{N \times N}$ is given by
 \[
\tilde{M}_\ell \left( \mathbf{x}_1, \cdots, \mathbf{x}_N \right)^*= \left(M_\ell \mathbf{x}_1, \cdots, M_\ell \mathbf{x}_N \right)^*
\]
Modulation operators are implemented as the pointwise product of each row of the matrix $\mathbf{X}$ with harmonics $e^{-2\pi il \frac{.}{N}}$.
The translation and modulation operator on $\mathbb{C}^{N \times N}$ are unitary operators and the following properties can be concluded 
\[
\left( \tilde{T}_\ell \right)^*= \left( \tilde{T}_\ell \right)^{-1}= \tilde{T}_{N-l} \textrm{and} \left( \tilde{M}_\ell \right)^*= \left( \tilde{M}_\ell \right)^{-1}= \tilde{M}_{N-l}.
\]

The circular convolution of two spaces $\mathbf{X}, \mathbf{Y} \in \mathbb{C}^{N \times N}$ is defined by the convolution of functions, which defined on the space $\mathbb{Z}_N \times \mathbb{Z}_N$ or can be written as:
\[
\mathbf{X} \ast \mathbf{Y}= \left( \sum_{i=0}^{N-1}\mathbf{x}_i \ast \mathbf{y}_{0-i}, \cdots, \sum_{i=0}^{N-1}\mathbf{x}_i \ast \mathbf{y}_{N-1-i} \right)
\]
Hence, if $\mathbf{\tilde{X}}= \left( \mathbf{x}, 0, \cdots, 0 \right)$, the convolution of $\mathbf{\tilde{X}}$ and $\mathbf{Y}$ is given by
\[
\mathbf{\tilde{X}} \ast \mathbf{Y}= \left( \mathbf{x} \ast \mathbf{y}_0, \cdots, \mathbf{x}\ast \mathbf{y}_{N-1} \right)
\]
Moreover, the circular involution or circular adjoint of $\mathbf{X} \in \mathbb{C}^{N\times N} $ is given by 
\[
\mathbf{X}^*= \left( \mathbf{x}_1^*, \cdots, \mathbf{x}_N^*\right)^*
\]
where $\mathbf{x}_1, \cdots, \mathbf{x}_N \in \mathbb{C}^p$ and $\mathbf{x}_i^*\left(\ell \right)= \overline{\mathbf{x}\left( N- \ell \right)}$. Note that the complex linear space $\mathbb{C}^{N\times N}$ equipped with $\ell^1$-norm, the circular convolution and involution defined above is a Banach $*$-algebra.

The unitary discrete Fourier transform of $\mathbf{X} \in \mathbb{C}^{N \times N}$ is defined by
\[
\hat{\mathbf{X}}= \left( \mathcal{F}_N\left( \mathbf{x}_1\right), \cdots, \mathcal{F}_N\left( \mathbf{x}_N\right) \right) 
\]
where $\mathbf{x}_1, \cdots, \mathbf{x}_N \in \mathbb{C}^N$ and the Fourier transform $\mathbf{x}_i$ is given by
\[
\mathcal{F}_N\left(\mathbf{x}_i\right) \left( \ell \right)= \frac{1}{\sqrt{N}} \sum_{k=0}^{N-1} \mathbf{x}_i\left( k\right) \overline{\omega}_{\ell} \left(k\right)=\frac{1}{\sqrt{N}} \sum_{k=0}^{N-1} \mathbf{x}_i\left( k\right) e^{-2\pi i \ell \frac{k}{N}}
\]
The Fourier transform is a unitary operator on the $\mathbb{C}^{N \times N}$ with the Frobenius norm. In fact, for all $\mathbf{X}\in \mathbb{C}^{N \times N}$ : 
\[
\|\langle \hat{\mathbf{X}}, \hat{\mathbf{X}} \rangle \|= \|\langle \mathbf{X}, \mathbf{X} \rangle \|
\]
We also have the following relationships.
\[
\widehat{\tilde{T}_{\ell}\mathbf{X}}= \tilde{M}_{\ell} \hat{\mathbf{X}} \quad \widehat{\tilde{M}_{\ell}\mathbf{X}}= \tilde{T}_{N-\ell}\hat{\mathbf{X}} \quad \hat{\mathbf{X}^*}= \overline{\hat{\mathbf{X}}} \quad \widehat{\mathbf{X} \ast \mathbf{Y}}= \hat{\mathbf{X}}.\hat{\mathbf{Y}}
\]
for $\mathbf{X}, \mathbf{Y} \in \mathbb{C}^{N \times N}$ and $\ell \in \mathbb{Z}_N$. The inverse Fourier formula for $\mathbf{X} \in \mathbb{C}^{N \times N}$ is given by
\[
\mathbf{X}= \left(\mathbf{x}_1, \cdots, \mathbf{x}_N \right)^*=\left( \mathcal{F}_N^{-1}\left( \mathbf{x}_1 \right), \cdots, \mathcal{F}_N^{-1}\left( \mathbf{x}_N \right) \right) ^*
\]
Translation operators are refered as time shift operators and modulation operators are refered as frequency shift operators.
Time-frequency shift operators $\pi \left( k, l\right)$ combines translations by $k$ and modulation by $l$.
\[
\pi \left( k,\ell \right) \mathbf{X}= \tilde{M}_\ell \tilde{T}_k\mathbf{X}
\]

The Gabor Fusion transform $V_{\mathbf{Y}}$ of a signal $\mathbf{x} \in \mathbb{C}^N$ with respect to the window $\mathbf{Y}\in \mathbb{C}^{N\times N}$ is given by
\begin{equation}
V_{\mathbf{Y}}\mathbf{x}\left( k,\ell \right)= \langle \mathbf{x}, \pi \left( k,\ell \right) \mathbf{Y} \rangle= \left( V_{\mathbf{y}_0}\mathbf{x}\left( k,\ell \right), \cdots, V_{\mathbf{y}_{N-1}}\mathbf{x}\left( k,\ell \right) \right)^*
\end{equation}
 Now consider $\mathbf{Y} \in \mathbb{C}^{N \times N}$ and $\Lambda \subset \{ 0, \cdots, N-1 \} \times \{0, \cdots, N-1\}$. The set
\[
\left( \mathbf{Y}, \Lambda \right) =\{ \pi \left(k,\ell\right) \mathbf{Y}\}_{\left(k,\ell\right) \in \Lambda}
\]
is called the Gabor Fusion System which is generated by $\mathbf{Y}$ and $\Lambda$. A Gabor Fusion System which spans $\mathbb{C}^N$ is a fusion frame and is referred to as a Gabor Fusion Frame.
\\Next theorem explains the necessary conditions that the set $\{\tilde{M}_{\ell}\tilde{T}_k \mathbf{Y}\}_{\ell=1, k=1}^{N, N}$ becomes a tight fusion frame.
\begin{theorem}
Assume $\mathbf{x}\in \mathbb{C}^N$  and $\{\mathbf{y}_1,\cdots,\mathbf{y}_M\}$ is a $B$-tight fusion frame for $W_{N,N}=span\{\mathbf{y}_1,\cdots,\mathbf{y}_M\}$. Consider also $W_{k,\ell}=span\{T_kM_\ell\mathbf{y}_j\}_{j=1}^M$ for $k,\ell=1,\cdots,N$. Then, the set $\{W_{k,\ell}\}_{k=1,\ell=1}^{N,N}$ constitutes a $\frac{N\Vert \mathbf{Y}\Vert _2^2}{B}$ tight fusion frame and we have the following equality:
\[
\sum_{k,\ell=0}^{N-1}\Vert P_{k,\ell} \mathbf{x}\Vert^2=\frac{N\Vert \mathbf{Y}\Vert _2^2}{B}\Vert \mathbf{x}\Vert _2^2
\]

\end{theorem}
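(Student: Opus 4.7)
The plan is to apply Proposition \ref{main1} directly, identifying the outer index $i$ of that proposition with the pair $(k,\ell) \in \{0,\ldots,N-1\}^2$ (so $L = N^2$), keeping the inner index $j$ ranging over $1,\ldots,M$, and setting $f_{(k,\ell),j} := T_k M_\ell \mathbf{y}_j$. The distinguished index $i_0$ will be $(k_0,\ell_0) = (0,0)$, at which $f_{(0,0),j} = \mathbf{y}_j$, so that the given $B$-tight frame hypothesis for $W_{N,N} = W_{0,0}$ supplies the required $B$-tight frame for $W_{i_0}$. The coisometries of the proposition will be taken to be $\mathcal{U}_{k,\ell} := T_k M_\ell$.

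First I would verify the coisometry hypothesis: since each translation $T_k$ and each modulation $M_\ell$ on $\mathbb{C}^N$ is a unitary, the composition $T_k M_\ell$ is itself a unitary, hence in particular a coisometry from $\mathbb{C}^N$ to $\mathbb{C}^N$. By construction $\mathcal{U}_{k,\ell} f_{(0,0),j} = T_k M_\ell \mathbf{y}_j = f_{(k,\ell),j}$ for every $j$, matching the structural hypothesis of Proposition \ref{main1}.

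Next I would verify the per-$j$ tightness in the ambient space. For each fixed $j$, the family $\{T_k M_\ell \mathbf{y}_j\}_{k,\ell=0}^{N-1}$ agrees, up to the unimodular scalars $e^{2\pi i k\ell/N}$ arising from the commutation $T_k M_\ell = e^{2\pi i k\ell/N} M_\ell T_k = e^{2\pi i k\ell/N}\pi(k,\ell)$, with the standard finite Gabor system generated by the window $\mathbf{y}_j$. Such scalars do not affect the modulus of the analysis coefficients, so the two systems have identical frame operators, and the short-time Fourier inversion formula recalled at the end of Section 3 then shows that $\{T_k M_\ell \mathbf{y}_j\}_{k,\ell=0}^{N-1}$ is an $A_j$-tight frame for $\mathbb{C}^N$ with $A_j = N\|\mathbf{y}_j\|_2^2$.

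Having verified the three hypotheses, Proposition \ref{main1} will yield that $\{W_{k,\ell}\}_{k,\ell=0}^{N-1}$ is tight with constant
\[
\frac{\sum_{j=1}^M A_j}{B} \;=\; \frac{N\sum_{j=1}^M \|\mathbf{y}_j\|_2^2}{B} \;=\; \frac{N\|\mathbf{Y}\|_2^2}{B},
\]
which is exactly the claimed identity, once $\|\mathbf{Y}\|_2^2$ is interpreted as the squared Frobenius norm of the matrix with rows $\mathbf{y}_1,\ldots,\mathbf{y}_M$. There is no serious obstacle here: the only things that require attention are the notational reconciliation between the conventions $\pi(k,\ell) = M_\ell T_k$ used in Section 3 and $T_k M_\ell$ used in the statement, and the minor reading that the phrase ``$B$-tight fusion frame for $W_{N,N}$'' in the hypothesis must be understood as an ordinary $B$-tight frame for $W_{N,N}$, which is the form actually consumed by Proposition \ref{main1}.
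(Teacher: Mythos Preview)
Your proposal is correct and follows essentially the same approach as the paper: you apply Proposition~\ref{main1} with $\mathcal{U}_{k,\ell}=T_kM_\ell$ as the coisometries, use the $B$-tightness of $\{\mathbf{y}_j\}$ on $W_{0,0}$ for the distinguished index, and invoke the standard Gabor tight-frame bound $A_j=N\|\mathbf{y}_j\|_2^2$ for each $j$. Your treatment is in fact slightly more careful than the paper's, since you explicitly address the harmless unimodular discrepancy between $T_kM_\ell$ and $\pi(k,\ell)=M_\ell T_k$ and spell out the identification $\sum_j A_j = N\|\mathbf{Y}\|_2^2$ via the Frobenius norm.
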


\begin{proof}
All that has to be done is to verify that $\left\{ \{ T_k M_\ell \mathbf{y}_i \}_{k, \ell = 1}^N \right\}_{i = 1}^M$ satisfies the criteria of proposition \ref{main1}.  First, for a given value of $j$, we have that $\{ T_k M_\ell \mathbf{y_j} \}_{k,\ell = 1}^N$ is a $A_j=N\| \mathbf{y}_j\|^2$ tight frame in $\mathbb{C}^N$ by the elementary Gabor theory (this can be seen the prior section).  It should clear by its nature that the time-frequency shift operator $T_k M_\ell$ is a co-isometry for a set $k, \ell$, since it was mentioned before $T_k$ and $M_{\ell}$ are both unitary operators for every $k, \ell$. Finally, we know by the assumption that $\{ \mathbf{y}_j \}_{j=1}^M$ is $B$-tight on its ambient space $W_{0,0}$.  Seeing that the conditions for the proposition are satisfied, we have the conclusion that $\{ (W_{k,\ell},1) \}_{k,\ell=0}^{N-1}$ is a $\frac{N\Vert \mathbf{Y}\Vert _2^2}{B}$-tight fusion frame on $\mathbb{C}^N$.
\end{proof}



\section{\bf{Gabor Fusion Frames and Phaseless Reconstruction}}
In this section, we are looking for some conditions such that the tight Gabor fusion frame allows phase retrieval. To state these conditions, we provide some theorems should be necessary to explain the main result.  The next lemma shows that if we add a vector to a phaseless retrieval frame, the new frame also allows phaseless retrieval.
\begin{lemma}\label{p1}
Let $\{\phi_i\}_{i=1}^N$ be a frame for $\mathbb{C}^N$ that allows phase reconstruction. If we add a vector $\phi_{N+1}$ to $\{\phi_i\}_{i=1}^N$, then $\{\phi_i\}_{i=1}^{N+1}$, this will also allow phaseless reconstruction. 
\end{lemma}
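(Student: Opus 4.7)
The plan is to observe that phase retrievability is a statement about injectivity of the modulus-of-inner-products map, and that this map only becomes ``more injective'' as we add measurements. So the proof is essentially a tautology: any ambiguity for the larger frame would already be an ambiguity for the smaller one.

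First I would recall the definition: $\{\phi_i\}_{i=1}^N$ allows phase retrieval if the map $\mathcal{A}:\mathbb{C}^N/\mathbb{T}\to \mathbb{R}^N$ given by $\mathcal{A}(x)(i)=|\langle x,\phi_i\rangle|$ is injective, i.e.\ whenever $|\langle x,\phi_i\rangle|=|\langle y,\phi_i\rangle|$ for all $i=1,\dots,N$, there is $c\in\mathbb{T}$ with $x=cy$. Denote by $\mathcal{A}'$ the analogous map associated with the augmented family $\{\phi_i\}_{i=1}^{N+1}$.

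Next I would take $x,y\in\mathbb{C}^N$ with $\mathcal{A}'(x)=\mathcal{A}'(y)$, i.e.\ $|\langle x,\phi_i\rangle|=|\langle y,\phi_i\rangle|$ for every $i=1,\dots,N+1$. Restricting this chain of equalities to $i=1,\dots,N$ gives $\mathcal{A}(x)=\mathcal{A}(y)$, so by the hypothesis on the original frame there exists $c\in\mathbb{T}$ with $x=cy$; this is exactly the phase retrieval property for the augmented family. For completeness I would also note that adding a vector preserves the frame property: the lower frame bound $A$ is unchanged, and the upper bound grows at most by $\|\phi_{N+1}\|^2$, so $\{\phi_i\}_{i=1}^{N+1}$ is still a frame for $\mathbb{C}^N$.

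There is no real obstacle here; the statement is monotonicity of phase retrievability under enlargement of the measurement set. The only thing to be careful about is to phrase the argument in terms of the quotient $\mathbb{C}^N/\mathbb{T}$ as in Section 2, so that the conclusion matches the definition of ``allows phase retrieval'' used earlier in the paper.
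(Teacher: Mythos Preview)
Your argument is correct and is essentially identical to the paper's own proof: both assume equal moduli for all $N+1$ measurements, restrict to the first $N$, and invoke the phase retrieval property of the original frame to obtain $x=cy$ with $|c|=1$. Your additional remark about the frame bounds being preserved is not in the paper but is harmless and correct.
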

\begin{proof}
Consider that for $x_1,x_2\in \mathbb{C}^N$, we have $\{\vert \langle x_1,\phi_i \rangle\vert\}_{i=1}^{N+1}=\{\vert \langle x_2,\phi_i \rangle\vert\}_{i=1}^{N+1}$. Hence, we have $\{\vert \langle x_1,\phi_i \rangle\vert\}_{i=1}^{N}=\{\vert \langle x_2,\phi_i \rangle\vert\}_{i=1}^{N}$. So, $x_1=cx_2$ where $\vert c \vert=1$ since $\{\phi_i\}_{i=1}^N$ allows phase retrieval for $\mathbb{C}^N$. Thus $\{\phi_i\}_{i=1}^{N+1}$ also allows phase retrieval. 
\end{proof}
The prior lemma is important in the construction of phase retrieval frames. If we have a phase retrieval frame for $\mathbb{C}^N$, then we can construct a new frame that also allows phase retrieval by adding a vector to the frame vector set. On the other hand, to show the phase retrievability of a frame, it is enough to show that a subset of the frame vectors that spans the ambient space allows phaseless reconstruction.

Next proposition will state the conditions such that a fusion frame is phase retrieval
\begin{proposition}\label{phase}
Let $\{e_i\}_{i=1}^N$ be an orthonormal basis for $\mathbb{C}^N$. Moreover, for every $j=1,\cdots,M$,$\{f_{ij}\}_{i=1}^n$ is a Parseval frame for the subspace $W_j$ generated by these vectors and $f_{ij}$ is the linear sumation of $\{e_i\}_{i=1}^N$. Suppose that $\{f_{ij}\}_{j=1}^M$ for every $i=1,\cdots,n$ is a Parseval frame for $\mathbb{C}^N$ and $\{W_j\}_{j=1}^M$ is a fusion frame and there exists $i_0$ such that $\{f_{i_0j}\}_{j=1}^M$ is a phase retrieval frame for $\mathbb{C}^N$. Then $\{W_j\}_{j=1}^M$ is a phase retrieval fusion frame for $\mathbb{C}^N$ if the matrix $S_{M\times N}$ has a left inverse matrix $V_{N\times M}$ such that
\[
VS=I_{N\times N}.
\]
\end{proposition}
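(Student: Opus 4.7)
The plan is to establish injectivity of the measurement map $\mathcal{A}$ directly: assume $x_1, x_2 \in \mathbb{C}^N$ satisfy $\Vert P_j x_1\Vert = \Vert P_j x_2\Vert$ for every $j = 1, \ldots, M$, and derive $x_1 = c x_2$ for some $c \in \mathbb{T}$. The idea is to reduce matters to the assumed phase-retrievability of the distinguished ``row frame'' $\{f_{i_0 j}\}_{j=1}^M$, so the entire work lies in extracting the scalar magnitudes $|\langle x, f_{i_0 j}\rangle|$ from the fusion-frame data.

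First I would invoke the Parseval-frame property of $\{f_{ij}\}_{i=1}^n$ inside $W_j$ to rewrite each measurement as
\[
\Vert P_j x\Vert^2 = \sum_{i=1}^n |\langle x, f_{ij}\rangle|^2,
\]
so the hypothesis becomes $\sum_{i=1}^n |\langle x_1, f_{ij}\rangle|^2 = \sum_{i=1}^n |\langle x_2, f_{ij}\rangle|^2$ for every $j$. The role of the matrix $S_{M \times N}$ is to encode, using the column-wise Parseval condition $\sum_{j=1}^M |\langle x, f_{ij}\rangle|^2 = \Vert x\Vert^2$ together with the decomposition of each $f_{ij}$ in the orthonormal basis $\{e_i\}_{i=1}^N$, the linear dependence of the measurement vector $(\Vert P_j x\Vert^2)_{j=1}^M$ on $N$ suitably chosen magnitude quantities associated with the row $i = i_0$. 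Multiplying by the left inverse $V_{N \times M}$ then extracts these quantities, and combining with the column Parseval relation yields $|\langle x_1, f_{i_0 j}\rangle| = |\langle x_2, f_{i_0 j}\rangle|$ for every $j = 1, \ldots, M$.

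Once the row-$i_0$ magnitudes agree, the hypothesis that $\{f_{i_0 j}\}_{j=1}^M$ is a phase-retrieval frame for $\mathbb{C}^N$ delivers $x_1 = c x_2$ with $|c| = 1$, as required. Lemma \ref{p1} could be brought in as needed to absorb any auxiliary vectors that arise in the recovery formula without disturbing phase-retrievability.

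The step I expect to be the real obstacle is the middle one: setting up $S$ so that the identity $VS = I_{N\times N}$ genuinely disentangles the contribution from the $i = i_0$ row inside the aggregated quantity $\sum_i |\langle x, f_{ij}\rangle|^2$, rather than only recovering some averaged linear functional of the row magnitudes. The two Parseval hypotheses (rows spanning $W_j$, columns tight in $\mathbb{C}^N$) are both indispensable here, because together they provide exactly enough linear constraints to make the inversion meaningful; dropping either would leave the $i_0$-row magnitudes underdetermined by the fusion measurements alone.
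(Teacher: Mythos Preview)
Your overall architecture matches the paper's: rewrite $\Vert P_j x\Vert^2$ via the Parseval property of $\{f_{ij}\}_{i=1}^n$ in $W_j$, invert a linear relation via $V$, and then invoke the phase-retrieval hypothesis on the distinguished row $\{f_{i_0 j}\}_{j=1}^M$. Where you diverge from the paper is in the identification of \emph{what} the matrix $S$ encodes, and this is exactly the step you flag as the obstacle.

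In the paper, $S$ does \emph{not} relate the fusion measurements to ``$N$ magnitude quantities associated with the row $i=i_0$.'' Rather, one sets $\nu_i = |\langle x, e_i\rangle|^2$ and uses the hypothesis that each $f_{ij}$ is a ``linear summation'' of the orthonormal basis $\{e_i\}$ to obtain an expansion of the form
\[
\Vert P_j x\Vert^2 \;=\; \sum_{i=1}^n |\langle x, f_{ij}\rangle|^2 \;=\; \sum_{i=1}^N c_{ij}\,|\langle x, e_i\rangle|^2,
\]
so that the fusion-measurement vector is precisely $S\nu$ with $S=[c_{ij}]$. The left inverse $V$ then recovers $\nu = (|\langle x, e_i\rangle|^2)_{i=1}^N$, and the final appeal to phase retrievability of $\{f_{i_0 j}\}_j$ is made from there. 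The column-wise Parseval condition you invoke is not actually used in this step; what does the work is the combination of the row-wise Parseval property in $W_j$ with the special structure of the $f_{ij}$ in the basis $\{e_i\}$, which is what forces the cross terms $\langle x,e_k\rangle\overline{\langle x,e_l\rangle}$ to drop out and leaves a genuinely linear relation in the $\nu_i$.

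So the gap in your plan is that you are trying to make $S$ isolate the $i_0$-row contributions directly, and you correctly sense that this cannot be done from $\sum_i |\langle x,f_{ij}\rangle|^2$ alone. The paper sidesteps this by passing instead through the orthonormal-basis magnitudes $\nu_i$; once those are in hand, the ``linear summation'' structure lets you reconstruct whatever row-$i_0$ data you need. Lemma~\ref{p1} plays no role here.
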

\begin{proof}
To show that there is an injective mapping from  the fusion frame measurements, $\{ \Vert P_{j}\mathbf{x} \Vert_2^2 \}_{j=1}^M$, to the vector $\mathbf{x}$ modulo phase (i.e., the equivalence class $\{ c \mathbf{x} : |c| = 1 \}$), we can just show that we can derive the values of the  frame measurements $\{| \langle \mathbf{x}, f_{i_0j}\rangle |^2 \}_{j=0}^{M}$ from the fusion frame measurements.  We can see this in the following way:

We denote $| \langle \mathbf{x}, e_{i}\rangle |^2=\nu_i$ for $i=1,\cdots,N$. On the other hand
\[
\Vert P_j\mathbf{x}\Vert _2^2=\sum_{i=1}^n | \langle \mathbf{x}, f_{i_0j}\rangle |^2=\sum_{i=1}^N c_{ij} |\langle\mathbf{x}, e_i\rangle |^2,
\]
since $\{f_{ij}\}_{i=1}^n$ is a Parseval frame for $\mathbb{C}^N$ for every $j=1,\cdots,M$ and $f_{ij}$ is the linear summation of $\{e_i\}_{i=1}^N$. We denote $S=\left[c_{ij}\right]_{j=1,i=1}^{M,N}$. Now consider $S\nu$. We will get the following output:
\[
[ \Vert P_{1}\mathbf{x} \Vert_2^2 , \Vert P_{2}\mathbf{x} \Vert_2^2 , \cdots, \Vert P_{M}\mathbf{x} \Vert_2^2 ]^T = S \nu
\]
Since $S$ has a left inverse matrix $V$ and $\{f_{i_0j}\}_{j=1}^M$ is a phase retrieval frame for $\mathbb{C}^N$, we are done.
\end{proof}
The Proposition \ref{phase} has an important role to construct phase retrieval fusion frame based on the phase retrieval frame.

\subsection{A Brief Overview of Circulant Matrices}

We will need to review a few key concepts of circulant matrices before we continue to the next section.

\begin{definition}
\label{CirculantDef}
A circulant matrix is a matrix of the following form:

$$
C=
\begin{bmatrix}
c_0     & c_{n-1} & \dots  & c_{2} & c_{1}  \\
c_{1} & c_0    & c_{n-1} &         & c_{2}  \\
\vdots  & c_{1}& c_0    & \ddots  & \vdots   \\
c_{N-2}  &        & \ddots & \ddots  & c_{N-1}   \\
c_{N-1}  & c_{N-2} & \dots  & c_{1} & c_0 \\

\end{bmatrix}.
$$
\end{definition}

\begin{remark}
We denote the $j^{th}$ division of unity as
$$ \omega_j = \mathrm{exp} \left( \frac{2 \pi i j}{N} \right) $$
\end{remark}

We will need the following theorem; a proof is given in \cite{CirculantMatrix}

\begin{theorem}
\label{CirculantDet}
Let $C$ be an $N \times N$ circulant matrix.  

Then $det(C) = \Pi_{j=0}^{N-1} \left( c_0 + c_1 \omega_j + c_2 \omega_j^2 + \cdots + c_{N-1} \omega_j^{N-1} \right) $.
\end{theorem}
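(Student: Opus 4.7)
The plan is to exploit the fact that every circulant matrix is a polynomial in a single ``shift'' permutation matrix, and that this shift matrix is explicitly diagonalized by the discrete Fourier transform. Let $P$ denote the $N \times N$ cyclic shift matrix (the circulant with $c_1 = 1$ and all other $c_k = 0$); note that $P$ is exactly the matrix of the translation operator $T$ introduced in Section~3. Then the given circulant matrix $C$ decomposes as
\[
C \;=\; c_0 I + c_1 P + c_2 P^2 + \cdots + c_{N-1} P^{N-1} \;=\; p(P),
\]
where $p(z) = c_0 + c_1 z + \cdots + c_{N-1} z^{N-1}$. This reduction is just a direct inspection of entries and is the first step I would carry out.

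Next I would diagonalize $P$. The vectors $v_j = (1, \omega_j, \omega_j^2, \ldots, \omega_j^{N-1})^T$ for $j = 0, 1, \ldots, N-1$ are eigenvectors of $P$ with eigenvalues $\omega_j$, since a direct check gives $P v_j = \omega_j v_j$ (using $\omega_j^N = 1$). Because the $v_j$ are the columns of the (nonsingular) Fourier matrix, this furnishes a full diagonalization $P = F D F^{-1}$ with $D = \mathrm{diag}(\omega_0, \omega_1, \ldots, \omega_{N-1})$.

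From the polynomial representation it follows immediately that
\[
C \;=\; p(P) \;=\; F\, p(D)\, F^{-1} \;=\; F \, \mathrm{diag}\bigl(p(\omega_0), p(\omega_1), \ldots, p(\omega_{N-1})\bigr) \, F^{-1}.
\]
Taking determinants gives
\[
\det(C) \;=\; \prod_{j=0}^{N-1} p(\omega_j) \;=\; \prod_{j=0}^{N-1}\bigl(c_0 + c_1 \omega_j + c_2 \omega_j^2 + \cdots + c_{N-1} \omega_j^{N-1}\bigr),
\]
which is the claimed formula.

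There is not really a hard obstacle here: the proof is essentially a clean application of standard finite-dimensional Fourier analysis, and the most error-prone step is simply keeping conventions straight (whether $P$ shifts up or down, and correspondingly whether the eigenvalues come out as $\omega_j$ or $\omega_j^{-1}$). Either convention yields the same product $\prod_j p(\omega_j)$ since $\{\omega_j^{-1}\} = \{\omega_{N-j}\}$ is just a relabeling of the same set of $N$-th roots of unity, so no generality is lost.
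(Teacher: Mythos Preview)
Your argument is correct and is in fact the standard proof of this classical result: write $C = p(P)$ for the cyclic shift $P$, diagonalize $P$ via the DFT matrix, and read off the eigenvalues $p(\omega_j)$. The bookkeeping about conventions is handled appropriately, and your remark that switching between $\omega_j$ and $\omega_j^{-1}$ merely permutes the factors is exactly right.

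As for comparison with the paper: the paper does not actually prove this theorem. It states the result and defers to the reference \cite{CirculantMatrix} (Gray's survey on Toeplitz and circulant matrices), where essentially the same DFT-diagonalization argument appears. So your approach is not just compatible with but identical in spirit to the cited source; you have simply filled in what the paper chose to outsource.
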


\begin{lemma}
\label{CirculantLemma}
Let $C$ be a matrix as in \ref{CirculantDef} with $c_0, c_1, \ldots, c_{n-1} = 1$ and $c_n, c_{n+1}, \ldots, c_{N+1} = 0$ for some $0 < n < N$.  Then $C$ is singular if and only if there is some value $j$, $1 \le j \le N - 1$, such that $N$ divides into $jn$.
\end{lemma}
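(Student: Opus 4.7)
The plan is to apply Theorem \ref{CirculantDet} directly and then analyze when any of the resulting factors vanishes. With $c_0 = c_1 = \cdots = c_{n-1} = 1$ and the remaining $c_k = 0$, the determinant formula gives
\[
\det(C) = \prod_{j=0}^{N-1}\bigl(1 + \omega_j + \omega_j^2 + \cdots + \omega_j^{n-1}\bigr),
\]
so singularity of $C$ is equivalent to the vanishing of at least one factor $p(\omega_j)$, where $p(z) = 1 + z + \cdots + z^{n-1}$.

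Next I would dispose of the $j=0$ factor separately. Since $\omega_0 = 1$, we have $p(\omega_0) = n$, which is a positive integer (as $0 < n < N$) and hence nonzero. So any singularity must come from some $j$ with $1 \le j \le N-1$.

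For such $j$ we have $\omega_j \neq 1$, so the geometric series formula applies and gives
\[
p(\omega_j) = \frac{\omega_j^n - 1}{\omega_j - 1}.
\]
The denominator is nonzero, so $p(\omega_j) = 0$ iff $\omega_j^n = 1$. Writing $\omega_j^n = e^{2\pi i jn/N}$, this equality holds iff $jn/N$ is an integer, i.e., iff $N \mid jn$. Combining this equivalence across all $j \in \{1, \ldots, N-1\}$ yields the claim: $C$ is singular iff some such $j$ satisfies $N \mid jn$.

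There is no real obstacle here; the proof is essentially bookkeeping around the circulant determinant identity and the closed form of the geometric sum. The only minor point to be careful about is the separate treatment of $j=0$ (where the geometric-series closed form is not valid), which is handled by the direct observation that $p(1) = n \neq 0$.
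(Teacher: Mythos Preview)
Your proof is correct and follows essentially the same approach as the paper: apply Theorem~\ref{CirculantDet}, note that the $j=0$ factor equals $n\neq 0$, and for $1\le j\le N-1$ use the geometric series identity to conclude that the factor vanishes iff $\omega_j^n=1$, i.e., iff $N\mid jn$. The only cosmetic difference is that the paper writes the geometric sum as $\frac{1-\omega_j^n}{1-\omega_j}$ rather than $\frac{\omega_j^n-1}{\omega_j-1}$, which is of course equivalent.
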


\begin{proof}
By \ref{CirculantDet}, we know that $C$ is singular if and only if there is some $j$ where $0 \le j \le N-1$ and $\sum_{k=0}^N c_k \omega_j^k = \sum_{k=0}^{n - 1} \omega_j^k =0$.  We notice that for $j=0$, we have $\sum_{k=0}^{n - 1} \omega_0^k = \sum_{k=0}^{n - 1} 1 = n$, so we will only consider the values $1 \le j \le N - 1$.

Consider $\sum_{k=0}^{n - 1} \omega_j^k$.  The geometric series gives us that this is equal to $\frac{ 1 - w_j^n}{1 - w_j}$; this is zero if and only if $w_j^n = \mathrm{exp} \left( \frac{2 \pi i j n}{N} \right) = 1$.  But this will only happen exactly when $\frac{j n}{N}$ is an integer, that is to say, when $N$ divides into $j n$.

\end{proof}

\subsection{Construction of Gabor Tight Fusion Frame}

In \cite{Bojarovska} the conditions on the window function such that the generated Gabor frame allows phase retrieval are given; we now present a method to produce a phase retrieval Gabor fusion frame.  The following theorem demonstrates the relationship of the phase retrievability of the Gabor fusion frames and the phase retrievability of the frame vectors which spans subspaces.

\begin{theorem}\label{th1}
	Let $\{e_i\}_{i=1}^{N}$ be an orthonormal basis for $\mathbb{C}^N$. Let $ \{f_i\}_{i=1}^{N}$ is a Parseval frame for the $n$-dimensional subspace $W_{0,0} \subset \mathbb{C}^N$ spanned by these vectors and $f_i$ for $i=1,\cdots,n$ is the linear summation of $\{e_i\}_{i=1}^N$ where $\sum_{i=0}^nf_i=\sum_{i=0}^{n_0}e_i$.  Moreover, $W_{k,\ell}= span\ \{T_k M_\ell f_i\}_{i=1}^n$ for $k,\ell=0,1,\cdots,N-1$. If there exists an $i_0$ such that $\{T_k M_\ell f_{i_0}\}_{k,\ell=0}^{N-1}$ is a phase retrieval frame for $\mathbb{C}^N$, then $\{W_{k,\ell}\}_{k,\ell=0}^{N-1}$ is a phase retrieval fusion frame if and only if for all values $1 \le j \le N - 1$, we have that $N$ does not divide into $j n_0$.
\end{theorem}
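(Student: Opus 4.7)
The plan is to apply Proposition \ref{phase} to reduce the phase retrieval question to the invertibility of a coefficient matrix $S$, recognize $S$ as a circulant of the kind handled by Lemma \ref{CirculantLemma}, and then read off the stated arithmetic condition. The forward direction will be essentially mechanical once the pieces are lined up; the backward implication is the step I expect to require extra work, because Proposition \ref{phase} only furnishes a sufficient condition.

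First, I would check that the Gabor fusion frame $\{W_{k,\ell}\}_{k,\ell=0}^{N-1}$ satisfies the hypotheses of Proposition \ref{phase}. Since $T_k M_\ell$ is unitary, $\{T_k M_\ell f_i\}_{i=1}^n$ is a Parseval frame for $W_{k,\ell}$; the classical Gabor identity recorded in Section 3 makes $\{T_k M_\ell f_i\}_{k,\ell=0}^{N-1}$ a tight frame for $\mathbb{C}^N$ for each fixed $i$; and the existence of a phase-retrieving window $f_{i_0}$ is given. So once the coefficient matrix $S$ from Proposition \ref{phase} is shown to admit a left inverse, phase retrievability will follow.

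Next I would identify $S$ explicitly. Because each $f_i$ is a linear summation of basis vectors with total support on the first $n_0$ of them, the subspace $W_{0,0}$ is generated by an $n_0$-element consecutive block of the orthonormal basis. The translation $T_k$ cycles this block while $M_\ell$ only rephases basis vectors and so leaves the support invariant, so that $\|P_{k,\ell}\mathbf{x}\|^2 = \sum_{m=0}^{n_0-1} |\mathbf{x}(k+m)|^2$ with indices read modulo $N$. Fixing $\ell$ and indexing by $k$, the resulting $N \times N$ submatrix of $S$ is precisely the circulant matrix of Lemma \ref{CirculantLemma} whose first row is $(1,\ldots,1,0,\ldots,0)$ with exactly $n_0$ ones.

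For the forward direction, under the hypothesis $N \nmid j n_0$ for all $1 \le j \le N-1$, Lemma \ref{CirculantLemma} makes this circulant block non-singular, so $S$ has a left inverse; Proposition \ref{phase} then concludes that $\{W_{k,\ell}\}$ is a phase retrieval fusion frame. For the converse I would argue directly: suppose $N \mid j n_0$ for some $j \in \{1,\ldots,N-1\}$. Lemma \ref{CirculantLemma} yields a nonzero real $\mathbf{v}$ with $S\mathbf{v} = 0$. Picking a constant $C > \max_m |v_m|$ and setting $|\mathbf{x}(m)|^2 = C + \max(v_m,0)$, $|\mathbf{y}(m)|^2 = C + \max(-v_m,0)$, one obtains two signals with $|\mathbf{x}(m)|^2 - |\mathbf{y}(m)|^2 = v_m$; the identity $\|P_{k,\ell}\mathbf{x}\|^2 - \|P_{k,\ell}\mathbf{y}\|^2 = (S\mathbf{v})_{k,\ell} = 0$ makes their fusion-frame measurements coincide, while $|\mathbf{x}(m)| \neq |\mathbf{y}(m)|$ at some coordinate rules out any phase equivalence $\mathbf{x} = c\mathbf{y}$. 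This converse step is where I expect the real substance of the argument to lie, since it cannot be extracted from Proposition \ref{phase} and instead requires turning the null vector of $S$ into explicit obstructions to phaseless recovery.
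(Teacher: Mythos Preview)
Your forward direction is exactly the paper's argument: verify the hypotheses of Proposition~\ref{phase}, compute $\|P_{k,\ell}\mathbf{x}\|^2$ in terms of the shifted block of basis coefficients, recognize the resulting coefficient matrix as the circulant with $n_0$ consecutive ones, and invoke Lemma~\ref{CirculantLemma} to get invertibility under the condition $N\nmid jn_0$. Your observation that $M_\ell$ leaves the support block invariant (so that the $\ell$ index is redundant and the relevant matrix really is the $N\times N$ circulant) is implicit in the paper's computation as well.

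Where you differ is the converse. The paper's proof, despite the ``if and only if'' in the statement, only establishes sufficiency: it shows that the arithmetic condition implies phase retrieval via Proposition~\ref{phase} and then stops. You supply the missing necessity direction by a direct construction --- taking a real null vector $\mathbf{v}$ of the singular circulant, splitting its positive and negative parts (shifted by a constant) into $|\mathbf{x}(m)|^2$ and $|\mathbf{y}(m)|^2$, and observing that the resulting signals have identical fusion measurements but distinct moduli at some coordinate, hence cannot be phase-equivalent. This argument is correct and is genuinely needed to justify the biconditional; your instinct that ``the real substance'' lies here is right, and your proposal is in fact more complete than the paper's own proof.
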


\begin{proof}
To show that $\{W_{k,\ell}\}_{k,\ell=0}^{N-1}$ is phase retrieval, we display that $\{W_{k,\ell}\}_{k,\ell=0}^{N-1}$ stisfies the conditions of the Proposition \ref{phase}. It is trivial $\{T_k M_\ell e_i\}_{i=1}^n$ is Parseval frame for $W_{k,l}$ for every $k,l=1,\cdots,N-1$. Moreover, there exists $i_0$ such that $\{T_k M_\ell \mathbf{e}_{i_0}\}_{k,\ell=0}^{N-1}$ is a phase retrieval frame.

	Now for $\ell \in\{0,1,\cdots,N-1\}$, consider the vector:
	$$v_{\ell} =[ | \langle \mathbf{x}, T_0 M_{\ell}\mathbf{e}_{1}\rangle |^2 , | \langle \mathbf{x}, T_1 M_{\ell} \mathbf{e}_{1}\rangle |^2, \cdots, | \langle \mathbf{x}, T_{N-1}M_{\ell} \mathbf{e}_{1}\rangle |^2 ]^T,$$
	It is trivial that $\{M_{\ell}e_i\}_{i=1}^N$ is also an orthonormal basis for $\mathbb{C}^N$. Moreover, we have:
	\begin{equation}
	\label{ProjEqn}
	\Vert P_{k,\ell}\mathbf{x} \Vert_2^2=\sum_{i=1}^n \vert \langle \mathbf{x}, T_k M_\ell \mathbf{f}_i\rangle \vert ^2=\sum_{i=1}^N c_i \vert \langle \mathbf{x}, M_\ell T_k \mathbf{e}_i\rangle \vert ^2=\sum_{i=1}^{n_0} c_iv_{l_i}.
	\end{equation}
	Now, consider the operator $S:\mathbb{R}^N \mapsto \mathbb{R}^N$ , 
	where $S$ is the circulant matrix such that the $j^{th}$ row is $T_{j-1} ( [c_1, \cdots, c_{n_0}, 0, \cdots, 0] )$, where the area of support in each row is $n$:
	
	$$S = \begin{bmatrix} c_1 & c_2 & c_{3} & \cdots & c_{n_0-1} & c_{n_0} & 0 & \cdots & 0 \\
	0 & c_1 & c_2 & \cdots & c_{n_0-1} & c_{n_0} & 0 & \cdots & 0 \\
	\vdots & & \cdots & & & & & \cdots & \\
	c_{n_0}& 0 & \cdots &  0  & \cdots & 0  & c_1 & \cdots & c_{n_0-1} \\
	c_{n_0-1} & c_{n_0} & 0 & \cdots & 0 & c_1& c_2 & \cdots & c_{n_0-2}\\
	\vdots & &  & \cdots& & & & \cdots & \vdots \\
	c_2 & c_3 & c_4 & \cdots & c_{n_0-1} & c_{n_0} & 0 & \cdots & c_1\\
	\end{bmatrix}$$
	
	By lemma \ref{CirculantLemma}, it can be seen that $S$ is not singular.
		
	Now by the proposition \ref{phase}, $\{W_{k,l}\}_{k,l=0}^{N-1}$ is phase retrieval.
	
\end{proof}

Theorem \ref{th1} demonstrates the relationship between the phase retrievality of Gabor frame and its associated Gabor fusion frame. In \cite{Bojarovska} the conditions on the window function such that the generated Gabor frame allows phaseless reconstruction are given. Based on Theorem \ref{th1}, we presented a method to produce phase retrieval Gabor fusion frame.

We shall end with a brief example of a Gabor fusion frame that allows phase retrieval, as an application of the prior theorem:

\begin{example}
Consider the orthogonal unit vectors ${e}_1=\mathbf{1}_{\{1,2,4\}}/\sqrt{3}$ and ${e}_2=\mathbf{1}_{\{3\}}$ in the space $\mathbb{C}^7$. By the Proposition 2.2 in \cite{Bojarovska}, $\{T_kM_l\mathbf{e}_1\}_{k,l=0}^6$ is a phase retrieval Gabor frame for $\mathbb{C}^7$. Suppose that $Y_{k,l}=span\ \{T_kM_le_i\}_{i=1}^2$ for $k,l=0,\cdots,6$. Since $e_1$ and $e_2$ are orthogonal so they are tight frame for the subspace $W_{0,0}$. As a result we fullfill the requirements of the Theorem \ref{th1} and the Gabor fusion frame $\{\mathbf{Y}_{k,l}\}_{k,l=0}^6$ allows phaseless reconstruction.
\end{example}


\end{document}